\documentclass[12pt]{amsart}

\usepackage{graphicx}

\usepackage{subcaption} %para facer a artistada coas figuras nun grid

\usepackage{tikz-cd}
\usepackage{hyperref}
\hypersetup{bookmarks=true,
	unicode=true,
	colorlinks=true,
	citecolor=black,
	linkcolor=black,
	urlcolor=black,
	plainpages=false,
	pdfpagelabels=true}

\usepackage{url}	%url adresses
\allowdisplaybreaks %align may cross pages

\usepackage{pgf}

\usepackage{xcolor}

\usepackage{comment} % para comentar trozos grandes de codigo

\usepackage{tikz}
\usetikzlibrary{arrows}
\usetikzlibrary{decorations.markings,arrows,automata,arrows,backgrounds,snakes}
\usepackage{tikz-cd} % para os diagraminhas
\usepackage{pgf}
\usetikzlibrary{babel}

\usepackage{appendix}

\usepackage{mathrsfs}
\usepackage{dsfont}

\usepackage{mathtools}

\usepackage{accents}

\usepackage[shortlabels]{enumitem}
\usepackage{txfonts}

\newtheorem{theorem}{Theorem}[section]

\newtheorem{lemma}[theorem]{Lemma}

\theoremstyle{definition}
\newtheorem{definition}[theorem]{Definition}
\newtheorem{example}[theorem]{Example}

\theoremstyle{remark}
\newtheorem{remark}[theorem]{Remark}

\usepackage{color}
\definecolor{darkgreen}{cmyk}{1,0,1,.2}
\definecolor{m}{rgb}{1,0.1,1}
\newdimen\theight
\def\TeXref#1{%
	\leavevmode\vadjust{\setbox0=\hbox{{\tt
				\quad\quad  {\small \textrm #1}}}%
		\theight=\ht0
		\advance\theight by \lineskip
		\kern -\theight \vbox to
		\theight{\rightline{\rlap{\box0}}%
			\vss}%
}}%

\subjclass[2020]{18G90, 54H25, 55M20, 55N10, 55N35}

\begin{document}

\thanks{The author was partially supported by the grant ED431C 2023/31 (Xunta de Galicia, FEDER) and by Programa de axudas á etapa predoutoral da Xunta de Galicia.
 }
    
	\title{Singular multivalued homology}

	\author[A. Majadas-Moure 
	]{%
		Alejandro O. Majadas-Moure  %\and David Mosquera-Lois
		%\and
		%etc.
	}

	\address{
		Alejandro O. Majadas-Moure \\
		Departamento de Matemáticas, Universidade de Santiago de Compostela, Spain}
	\email{alejandro.majadas@usc.es}

	\begin{abstract} 
		Let $X$ be a compact, Hausdorff topological space. Then $H^M_n(X)=0$ for all $n>0$, where $H^M$ is the multivalued analogue of singular homology. The case $n=1$ is already known \cite{St}.
	\end{abstract}
	
	%\subjclass[2020]{
	
	%}
	
	\maketitle
	\section{Introduction}
    Lefschetz fixed-point theorem is one of the main results in fixed-point theory. The study of fixed points of continuous multivalued maps between topological spaces has been an important subject in mathematics since long ago \cite{EM, Ka, ONeil}. Since homology plays an important role in classical fixed-point theory of continuous maps (for example through Lefschetz or Brouwer fixed-point theorems), we can ask whether a ``multivalued singular homology'' (that is, the singular $n$-simplices would be continuous multivalued maps $\sigma:\Delta_n\rightarrow X$, where $\Delta_n\subset\mathbb{R}^{n+1}$ is the standard $n$-simplex) may be useful to study fixed-points of multivalued maps.

    In algebraic geometry, it is usual to find the Lefschetz trace theorem in the context of correspondences (multivalued maps) \cite{Grothendieck, Va}. Also in algebraic geometry, there exists a very important singular homology where the singular simplices are correspondences between the (algebraic) standard simplex and $X$, called Suslin homology  \cite{SV}.

    Hence, back in algebraic topology, we can ask if it is possible to construct a singular homology of topological spaces with multivalued simplices and, in an affirmative case, what its meaning is. More precisely, let $f:X\rightarrow Y$ be a map between two topological spaces. Let $\mathrm{gr}(f)=\{(x,y)\in X\times Y| f(x)=y\}$ be its graph and $\tau_{|\mathrm{gr}(f)}:\mathrm{gr}(f)\rightarrow X$ the restriction of the canonic projection $X\times Y\rightarrow X$. The following are equivalent:
    \begin{enumerate}
        \item f is continuous.
        \item the map $\tau_{|\mathrm{gr}(f)}$ is proper \cite[Chapitre I, S. 10]{Bo}.
    \end{enumerate}
    Thus, we can use this characterization to define a continuous multivalued map between topological spaces $X$ and $Y$ as a subset $T\subset X \times Y$, such that the restriction $\tau_{|T}:T\rightarrow X$ of the canonical projection $X\times Y\rightarrow X$ is proper, surjective, and with finite fibers. This is precisely the translation of the correspondences in \cite{SV} to topology. Hence, the problem consists of finding a  singular homology of a topological space $X$ defined with continuous multivalued maps $\sigma:\Delta_n\rightarrow X$.

    We will show that, though it is possible to construct such a homology $H^M_n$, if $X$ is compact and Hausdorff, then $H_n^M(X)=0$ for all $n>0$ (Theorem~\ref{teor principal}). The particular case of $n=1$ was already addressed in \cite[Corollary 1.1]{St}. Note however that the definition of continuous multivalued map used in \cite{St} is different from our definition (for example, if $X$ is not Hausdorff, the definition used in \cite{St} does not extend the usual definition of continuous maps since $\mathrm{id}:X\rightarrow X$ is not continuous) \cite[Theorem 2]{St2}.

    In this paper we also define the multivalued homotopy groups and show that they also vanish for $n>0$. Finally we obtain a similar fixed-point result to \cite[Theorem 4]{St1}.

\section{Continuous multivalued maps}
\begin{definition}
    A continuous map $f:X\rightarrow Y$ between two topological spaces is \textit{proper} if it is universally closed, that is, if for every continuous map $Y\rightarrow Y'$, the induced map $f':X\times_Y Y'\rightarrow Y'$ is closed. 
\end{definition}
\begin{remark}
The previous definition is equivalent to the one in \cite[Chapitre I, S. 10]{Bo}, that says that $f$ is proper if $X\times Z\rightarrow Y\times Z$ is closed for every topological space $Z$. On the one hand, the first definition implies the second one (given $Z$, we consider $Y'=Y\times Z$). On the other hand, if $f:X\rightarrow Y$ verifies the condition of \cite{Bo}, in particular, $f$ must be closed and then in every diagram 
\[\begin{tikzcd}
	{X\times_Y Y'} & {Y'} \\
	X & Y
	\arrow["q", from=1-1, to=1-2]
	\arrow[from=1-1, to=2-1]
	\arrow["g", from=1-2, to=2-2]
	\arrow["f", from=2-1, to=2-2]
\end{tikzcd}\]
we see that $\mathrm{Im}(q)=g^{-1}(\mathrm{Im}(f))$ is closed in $Y'$. Considering now the diagram 
\[\begin{tikzcd}
	{X\times_Y Y'} & {Y\times_Y Y'=Y'} \\
	{X\times Y'} & {Y\times Y'}
	\arrow["q", from=1-1, to=1-2]
	\arrow[hook, from=1-1, to=2-1]
	\arrow["\beta", hook, from=1-2, to=2-2]
	\arrow["{f\times \mathrm{id}}", from=2-1, to=2-2]
\end{tikzcd}\]
we see that $\mathrm{Im(q)\cap \beta^{-1}(f\times \mathrm{id})(F)}$ is an intersection of two closed sets for every $F$ closed in $X\times Y'$. If $F'=(X\times_Y Y')\cap F$ is closed in $X\times_Y Y'$, we have
\begin{equation*}
    q(F')=\mathrm{Im}(q)\cap \beta^{-1}(f\times \mathrm{id})(F),
\end{equation*}
since, if $q(x_1,y_1)\in \beta^{-1}(f\times\mathrm{id})(F)$, there exists $(x_2,y_2)\in F$ such that 
\begin{equation*}
(f(x_2),y_2)=\beta q(x_1,y_1)=(f(x_1), y_1),
\end{equation*}
and then $y_1=y_2$ and $f(x_1)=f(x_2)$. Since $(x_1,y_1)\in X\times_Y Y'$, we have $f(x_1)=g(y_1)$ and then $f(x_2)=g(y_2)$, so $(x_2,y_2)\in X\times_Y Y'$. Hence $(x_2,y_2)\in F'$ and $q(x_1,y_1)=q(x_2,y_2)$. The other inclusion is obvious and this shows that $q(F')$ is closed in $Y'$.
\end{remark}
Due to the previous remark, from now, we will cite some results in \cite[Chapitre I, S. 10]{Bo} concerning proper maps when necessary. In particular, $f$ is proper iff it is closed and its fibers $f^{-1}(y)$ are compact for every $y\in Y$ \cite[Chapitre I, S. 10, n. 2, Théorème 1]{Bo}.
\begin{definition}
    A \textit{continuous multivalued map} $X\rightarrow Y$ is a subset $T$ of $X\times Y$ such that the restriction $\tau_{|T}:T\rightarrow X$ of the canonical projection $X\times Y \rightarrow X$ is proper, surjective and with finite fibers (equivalently $\tau_{|T}$ closed, surjective and with finite fibers, since every finite fiber is compact).
\end{definition}
If $f:X\rightarrow Y$ is a single-valued map, its graph $T:=\mathrm{gr}(f)=\{(x,f(x))\in X\times Y\}$ is a multivalued map and it is continuous iff $f$ is continuous as a single-valued map (since $\tau_{|\mathrm{gr}(f)}:\mathrm{gr}(f)\rightarrow X$ is a homeomorphism). We will denote the set of continuous multivalued maps from $X$ to $Y$ by $M(X,Y)$.
We will use the following properties of continuous multivalued maps.
\begin{lemma}\label{lema composicion}
    Let $X,Y, Z$ topological spaces and $R\in M(X,Y)$, $S\in M(Y,Z)$. Let $S\circ R$ be the composition of $R$ with $S$, \textit{i.e.}, in terms of the graphs, $S\circ R$ is the image of $(R\times Z)\cap (X\times S)$ by the canonical projection $X\times Y\times Z\rightarrow X\times Z$. Then $S\circ R\in M(X,Z)$. This composition is clearly associative.
\end{lemma}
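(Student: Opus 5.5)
We must show that the projection $\pi: S\circ R\to X$ is surjective, has finite fibers, and is proper. The plan is to factor it through proper maps and use the facts from \cite[Chapitre I, S. 10]{Bo} that proper maps are stable under base change and under composition, and that the image of a proper map is closed.

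\textbf{Surjectivity and finite fibers.} Surjectivity is direct. Given $x\in X$, pick $y$ with $(x,y)\in R$, since $\tau_{|R}$ is surjective. Then pick $z$ with $(y,z)\in S$. The point $(x,y,z)$ lies in $W:=(R\times Z)\cap(X\times S)$, so $(x,z)\in S\circ R$. For finiteness, the fiber of $\pi$ over $x$ is the image of the fiber of $W\to X$ over $x$. That fiber is $\bigcup_{y:(x,y)\in R}\{x\}\times\{y\}\times S_y$, where $S_y=\{z : (y,z)\in S\}$. This is a finite union of finite sets, since $R$ and $S$ have finite fibers.

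\textbf{Properness.} I will show that $p:W\to X$, $(x,y,z)\mapsto x$, is proper, and then deduce properness of $\pi$.

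First, observe that $W\cong R\times_Y S$, the fiber product of $R\to Y$ (second projection) and $\tau_{|S}:S\to Y$. Under this identification, $W\to R$ is the base change of $\tau_{|S}$ along $R\to Y$. Hence $W\to R$ is proper by stability of properness under base change, which is immediate from the universally closed definition. Composing with $\tau_{|R}:R\to X$, the map $p$ is a composition of proper maps and so is proper. One should check that the subspace topology of $W\subset X\times Y\times Z$ agrees with the fiber-product topology; it does, since both are subspace topologies of the product.

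Next, write $p=\pi\circ q$, where $q:W\to S\circ R$ is the projection, which is surjective by definition of $S\circ R$. To see that $\pi$ is closed, let $C\subset S\circ R$ be closed. Then $q^{-1}(C)$ is closed in $W$, and $\pi(C)=p(q^{-1}(C))$ by surjectivity of $q$. This set is closed because $p$ is closed. Since $\pi$ is closed, surjective and has finite (hence compact) fibers, it is proper by the criterion \cite[Chapitre I, S. 10, n. 2, Théorème 1]{Bo} quoted earlier. Thus $S\circ R\in M(X,Z)$.

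\textbf{Associativity.} Both $(T\circ S)\circ R$ and $T\circ(S\circ R)$ equal the set of pairs $(x,w)$ for which there exist $y,z$ with $(x,y)\in R$, $(y,z)\in S$ and $(z,w)\in T$. This follows by unwinding the existential quantifiers.

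The main obstacle is the properness step, in particular making sure that base change and composition of proper maps are legitimately available. This is why I pass through the surjection $q$ rather than trying to show directly that $\pi$ is closed.
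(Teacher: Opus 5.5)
Your proof is correct and follows the paper's argument. Like the paper, you factor $S\circ R\to X$ through $W=(R\times Z)\cap(X\times S)$, obtain properness of $W\to R$ as a base change of $\tau_{|S}$ and of $W\to X$ by composing with $\tau_{|R}$, and then descend along the surjection $W\to S\circ R$. The only differences are cosmetic: the paper pulls back $\mathrm{id}_X\times\tau_{|S}$ along $R\hookrightarrow X\times Y$ and finishes directly with Bourbaki's Prop.~5 (if $g\circ f$ is proper and $f$ is surjective, then $g$ is proper), whereas you check closedness of the descended map by hand and conclude with the closed-plus-compact-fibers criterion.
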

\begin{proof}
    We must verify that $\tau_{|S\circ R}$ is proper (all the other things are straightforward). Since $\tau_{|S}$ is proper, $\mathrm{id}_X\times \tau_{|S}:X\times S\rightarrow X\times Y$ is also proper and then 
    \begin{equation*}
        (\mathrm{id}_X\times \tau_{|S})_{|(\mathrm{id}_X\times \tau_{|S})^{-1}(R)}:(R\times Z)\cap (X\times S)=(\mathrm{id}_X\times \tau_{|S})^{-1}(R)\rightarrow R
    \end{equation*}
    will also be proper, as follows from the diagram 
    \[\begin{tikzcd}
	{(\mathrm{id}_X\times \tau_{|S})^{-1}(R)=(X\times S)\times_{X\times Y} R} & R \\
	{X\times S} & {X\times Y}.
	\arrow[from=1-1, to=1-2]
	\arrow[from=1-1, to=2-1]
	\arrow[hook, from=1-2, to=2-2]
	\arrow["{\mathrm{id}_X\times \tau_{|S}}", from=2-1, to=2-2]
\end{tikzcd}\]
Now, since $\tau_{|R}:R\rightarrow X$ is proper and the composition of proper maps is also proper  \cite[Chapitre I, S. 10, n. 1, Prop. 5]{Bo}, the composition of these two maps
\begin{equation*}
    (R\times Z)\cap (X\times S)\rightarrow R\rightarrow X
\end{equation*}
is proper. But this composition equals the composition
\begin{equation*}
    (R\times Z)\cap(X\times S)\overset{\alpha}{\rightarrow} S\circ R\overset{\tau_{|S\circ R}}{\rightarrow} X,
\end{equation*}
where $\alpha$ is the restriction of the canonical projection $X\times Y\times Z\rightarrow X\times Z$. Since $\alpha$ is surjective, $\tau_{|S\circ R}$ is proper \cite[Chapitre I, S. 10, n.1, Prop. 5]{Bo}.
\end{proof}
\begin{example}\label{ejemplo}
    Let $X,Y,Z$ be topological spaces, $f:X\rightarrow Y$ a continuous map and $S\in M(Y,Z)$. Let us consider
    \[\begin{tikzcd}
	{(f\times \mathrm{id}_Z)^{-1}(S)=(X\times Z)\times_{Y\times Z}S} & S \\
	{X\times Z} & {Y\times Z.}
	\arrow[from=1-1, to=1-2]
	\arrow[hook, from=1-1, to=2-1]
	\arrow[hook, from=1-2, to=2-2]
	\arrow["{f\times\mathrm{id}_Z}", from=2-1, to=2-2]
\end{tikzcd}\]
We have $(f\times \mathrm{id}_Z)^{-1}(S)=S\circ\mathrm{gr}(f)$, since both sets agree with 
\begin{equation*}
    \{(x,z)\in X\times Z|(f(x),z)\in S\}. 
\end{equation*}
This construction establishes a contravariant functor $M(-,Z)$ from the category of topological spaces to the one of sets.

In a similar way, if $R\in M(X,Y)$ and $g:Y\rightarrow Z$ is a continuous map, we have that $\mathrm{gr}(g)\circ R$ is the image of $R$ by $\mathrm{id}_X\times g:X\times Y\rightarrow X\times Z$.
\end{example}

\begin{definition}
    Let $X,Y,X',Y'$ be topological spaces, $R\in M(X,Y)$ and $S\in M(X',Y')$. We define $R\boxtimes R'$ as
    \begin{equation*}
        R\boxtimes R'=\{(x,x',y,y')\in X\times X'\times Y\times Y'| (x,y)\in R, (x',y')\in R'\}.
    \end{equation*}
    We have that $R\boxtimes R'\in M(X\times X', Y\times Y')$ since $\tau_{|R\boxtimes R'}$ is the composition of the proper maps \cite[Chapitre I, S. 10, n. 1, Prop. 4]{Bo}
    \begin{equation*}
        R\boxtimes R'\rightarrow R\times R'\overset{\tau_{|R}\times\tau_{|R'}}{\rightarrow}X\times X',
    \end{equation*}
    where the first map is the obvious homeomorphism.
\end{definition}
\begin{lemma}\label{lema operaciones}
    Let $X,Y,Z,X',Y',Z'$ be topological spaces, $R\in M(X,Y)$, $S\in M(Y, Z)$, $R'\in M(X',Y')$ and $S'\in M(Y',Z')$. Then,
    \begin{equation*}
        (S\circ R)\boxtimes(S'\circ R')=(S\boxtimes S')\circ (R\boxtimes R').
    \end{equation*}
\end{lemma}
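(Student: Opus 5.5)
This is a set-theoretic identity between subsets of $X\times X'\times Z\times Z'$, so I will prove it by double inclusion, unwinding the definitions element-wise. No topology is needed: by Lemma~\ref{lema composicion} and the definition of $\boxtimes$, both sides are already known to lie in $M(X\times X',Z\times Z')$, so only equality of the underlying sets has to be checked. Throughout, the coordinates of $\boxtimes$ are ordered as in its definition, $(x,x',z,z')$. The description of the composition I use is the one from Lemma~\ref{lema composicion}: $(x,z)\in S\circ R$ iff there exists $y\in Y$ with $(x,y)\in R$ and $(y,z)\in S$.

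For the left side, a point $(x,x',z,z')$ lies in $(S\circ R)\boxtimes(S'\circ R')$ iff $(x,z)\in S\circ R$ and $(x',z')\in S'\circ R'$. This holds iff there exist $y\in Y$ and $y'\in Y'$ with $(x,y)\in R$, $(y,z)\in S$, $(x',y')\in R'$ and $(y',z')\in S'$.

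For the right side, $(x,x',z,z')\in(S\boxtimes S')\circ(R\boxtimes R')$ iff there is an intermediate point $(y,y')\in Y\times Y'$ with $(x,x',y,y')\in R\boxtimes R'$ and $(y,y',z,z')\in S\boxtimes S'$. By definition of $\boxtimes$, these two memberships are exactly the four conditions $(x,y)\in R$, $(x',y')\in R'$, $(y,z)\in S$ and $(y',z')\in S'$.

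The two descriptions coincide. In one direction, the witnesses $y$ and $y'$ from the left side pair up into the witness $(y,y')$ for the right side. In the other direction, a witness $(y,y')$ for the right side splits into its two coordinates. This gives both inclusions.

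The only point needing care is bookkeeping: keeping the coordinate permutation in $\boxtimes$ consistent, since the graph lives in $(X\times X')\times(Y\times Y')$ rather than $(X\times Y)\times(X'\times Y')$. I expect no real obstacle beyond stating this identification explicitly.
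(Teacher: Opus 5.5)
Your proof is correct: the paper states this lemma without proof, treating it as a routine verification, and your element-wise double inclusion is exactly that check. You pair the witnesses $y\in Y$, $y'\in Y'$ into the single witness $(y,y')\in Y\times Y'$ and split it back, using the coordinate ordering $(x,x',y,y')$ from the definition of $\boxtimes$.
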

\section{Multivalued singular homology}

\begin{definition}\label{def caras y homo}
    Let $\Delta_n=\{(t_0,\ldots, t_n)\subset \mathbb{R}^{n+1}|0\leq t_i\leq 1, \sum t_i=1 \}$ be the standard $n$-simplex. For each $i\in\{0,\ldots, n\}$, let $\delta_i^n:\Delta_{n-1}\rightarrow \Delta_n$ be the face maps, defined by $\delta_i^n(t_0,\ldots,t_{n-1})=(t_0,\ldots,t_{i-1},0,t_i,\ldots,t_{i-1})$. Given a topological space $X$, we define $S_n^M(X)=M(\Delta_n,X)$, 
    \begin{equation*}
        d_n^i=M(\delta_i^n, X):S_n^M(X)\rightarrow S^M_{n-1}(X)        
    \end{equation*}
    (see Example~\ref{ejemplo}), and $C_n^M(X)$ as the free abelian group with basis $S_n^M(X)$. We obtain a complex $(C_\ast^M(X),d_\ast)$ where $d_n$ is induced by $\sum_{i=0}^n(-1)^id_n^i$.

    For each $n\geq 0$ we also define
    \begin{equation*}
        H_n^M(X)=H_n(C_\ast^M,d_\ast).
    \end{equation*}
\end{definition}
If $R\in M(X,Y)$, by Lemma~\ref{lema composicion} there is a homomorphism of abelian groups $H_n(R):H_n^M(X)\rightarrow H_n^M(Y)$ induced by $S_\ast^M(R):S_\ast^M(X)\rightarrow S_\ast^M(Y)$.
\begin{definition}\label{def homotopia}
    Let $R,S\in M(X,Y)$. We say that $R$ and $S$ are $M$-homotopic if it exists $L\in M(X\times I, Y)$ ($I$ denotes the unit interval) such that $L_{|X\times\{0\}}=R$, $L_{|X\times\{1\}}=S$, where $L_{|X\times\{t\}}$ is the composition of $L$ with the inclusion $X\simeq X\times\{t\}\hookrightarrow X\times I$.
\end{definition}
\begin{lemma}\label{lema principal}
    If $R,S\in M(X,Y)$ are $M$-homotopic, then $H_n^M(R)=H_n^M(S)$ for each $n\geq 0$.
\end{lemma}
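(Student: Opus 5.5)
The plan is to follow the classical prism-operator proof of homotopy invariance for singular homology. The one new ingredient is that simplices are now multivalued, and the composition of Lemma~\ref{lema composicion} together with the product $\boxtimes$ supplies exactly what is needed.

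Let $L\in M(X\times I,Y)$ be an $M$-homotopy from $R$ to $S$. First I fix, once and for all, the standard triangulation of the prism $\Delta_n\times I$ into $n+1$ affine $(n+1)$-simplices. Concretely, for $i=0,\dots,n$ let $p_i:\Delta_{n+1}\to\Delta_n\times I$ be the affine map sending vertices $e_0,\dots,e_i$ to $(e_0,0),\dots,(e_i,0)$ and $e_{i+1},\dots,e_{n+1}$ to $(e_i,1),\dots,(e_n,1)$. These are ordinary continuous maps, so $\mathrm{gr}(p_i)\in M(\Delta_{n+1},\Delta_n\times I)$.

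Given a multivalued simplex $\sigma\in S_n^M(X)=M(\Delta_n,X)$, I define
\begin{equation*}
P(\sigma)=\sum_{i=0}^n(-1)^i\, L\circ(\sigma\boxtimes \mathrm{gr}(\mathrm{id}_I))\circ \mathrm{gr}(p_i)\in C_{n+1}^M(Y).
\end{equation*}
Here $\sigma\boxtimes\mathrm{gr}(\mathrm{id}_I)\in M(\Delta_n\times I,X\times I)$, up to the obvious reordering of coordinates. Lemma~\ref{lema composicion} shows that each summand lies in $S^M_{n+1}(Y)$, and $P$ extends linearly to $C_n^M(X)\to C_{n+1}^M(Y)$.

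The goal is the identity
\begin{equation*}
d P+P d = S_\ast^M(S)-S_\ast^M(R),
\end{equation*}
which immediately gives $H_n^M(R)=H_n^M(S)$. To verify it, I first compute the faces $d^j_{n+1}$ of each summand. By Example~\ref{ejemplo}, taking a face is precomposition with $\mathrm{gr}(\delta_j^{n+1})$. Associativity (Lemma~\ref{lema composicion}), together with the fact that graph composition is functorial for single-valued maps ($\mathrm{gr}(g)\circ\mathrm{gr}(f)=\mathrm{gr}(g\circ f)$), reduces each face to
\begin{equation*}
L\circ(\sigma\boxtimes\mathrm{gr}(\mathrm{id}_I))\circ\mathrm{gr}(p_i\circ\delta_j^{n+1}).
\end{equation*}
The maps $p_i\circ\delta_j^{n+1}$ are exactly the affine maps that occur in the classical proof. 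Internal faces cancel in pairs. The two extreme faces give $\Delta_n\times\{1\}$ and $\Delta_n\times\{0\}$. The remaining faces factor as $(\delta_k^n\times\mathrm{id}_I)\circ p'_l$, with $p'_l$ the prism maps in dimension $n-1$. The combinatorics are therefore identical to the classical case, and I will not redo them.

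What needs checking are two identities of multivalued maps. First, for the extreme faces,
\begin{equation*}
L\circ(\sigma\boxtimes\mathrm{gr}(\mathrm{id}_I))\circ\mathrm{gr}(\iota_t)=L_{|X\times\{t\}}\circ\sigma ,
\end{equation*}
where $\iota_t:\Delta_n\to\Delta_n\times I$ is $x\mapsto(x,t)$; for $t=0,1$ the right-hand side is $R\circ\sigma$ or $S\circ\sigma$. Second, for the remaining faces,
\begin{equation*}
(\sigma\boxtimes\mathrm{gr}(\mathrm{id}_I))\circ\mathrm{gr}(\delta_k^n\times\mathrm{id}_I)=(\sigma\circ\mathrm{gr}(\delta_k^n))\boxtimes\mathrm{gr}(\mathrm{id}_I).
\end{equation*}
The second identity is Lemma~\ref{lema operaciones} applied to $(\mathrm{gr}(\delta^n_k),\sigma)$ and $(\mathrm{gr}(\mathrm{id}_I),\mathrm{gr}(\mathrm{id}_I))$, using $\mathrm{gr}(f)\boxtimes\mathrm{gr}(g)=\mathrm{gr}(f\times g)$. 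The first follows the same way: $\iota_t=\mathrm{id}_{\Delta_n}\times c_t$, where $c_t$ is the constant map at $t$, so Lemma~\ref{lema operaciones} gives $(\sigma\boxtimes\mathrm{gr}(\mathrm{id}_I))\circ\mathrm{gr}(\iota_t)=\mathrm{gr}(j_t)\circ\sigma$ with $j_t:X\to X\times I$ the inclusion at level $t$. Composing with $L$ then gives $L_{|X\times\{t\}}\circ\sigma$.

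I expect the main obstacle to be exactly these bookkeeping identities. The subtle point is that for multivalued maps one must make sure no extra points appear: composition is defined via images of fibered products, and $\sigma\boxtimes\mathrm{gr}(\mathrm{id}_I)$ must pair the $I$-coordinate correctly. I would check each identity directly on elements, as in Example~\ref{ejemplo}: both sides equal $\{(x,y) : \exists\,z,\ (x,z)\in\sigma,\ ((z,t),y)\in L\}$, and analogously for the others. Once these hold, the classical cancellation argument applies verbatim.
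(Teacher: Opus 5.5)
Your proposal is correct and is essentially the paper's proof: your affine maps $p_i$ are exactly the paper's maps $r_n^i$ (both send $e_0,\dots,e_i$ to $(e_0,0),\dots,(e_i,0)$ and $e_{i+1},\dots,e_{n+1}$ to $(e_i,1),\dots,(e_n,1)$), your summands are its $h_n^i(\sigma)=L\circ(\sigma\boxtimes\mathrm{id}_I)\circ r_n^i$ with the same signs, and your two bookkeeping identities are exactly what Lemmas~\ref{lema composicion} and~\ref{lema operaciones} provide to carry the classical face relations over to the multivalued setting. One small point: to apply Lemma~\ref{lema operaciones} literally to $\iota_t=\mathrm{id}_{\Delta_n}\times c_t$, take $c_t$ defined on a one-point space and identify $\Delta_n\cong\Delta_n\times\{\ast\}$, or simply check that identity elementwise as you already propose.
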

\begin{proof}
    The proof is a simple adaptation of the usual proofs to the multivalued context. We will follow, for example, \cite[Section 5.3.1]{Zi}, since these details will be useful for Section~\ref{sec grupos}.

    Let $L\in M(X\times I,Y)$ as in Definition~\ref{def homotopia}. For each $i\in\{0,\ldots,n\}$, let $r_n^i:\Delta_{n+1}\rightarrow \Delta_n\times I$ be defined by
    \begin{equation*}
        r_n^i(t_0,\ldots,t_{n+1})=((t_0,\ldots,t_{i-1},t_i+t_{i+1},t_{i+2},\ldots,t_{n+1}),t_{i+1}+t_{i+2}+\ldots+t_{n+1}).
    \end{equation*}
    The following equalities are easy to check \cite[5.1.3.4, 5.1.3.5]{Zi}:
\begin{align*}
    &(1)\;r_n^{j+1}\circ\delta_{n+1}^i=(\delta_n^i\times \mathrm{id}_I)\circ r^j_{n-1},\; \text{if}\; 0\leq i \leq j \leq n-1,\\
    &(2)\;r_n^{i+1}\circ  \delta_{n+1}^{i+1}=r_n^i\circ\delta_{n+1}^{i+1},\;\text{if}\; i<n,\\
    &(3)\;r_n^i\circ \delta_{n+1}^{j+1}=(\delta_n^j\times\mathrm{id}_I)\circ r^i_{n-1},\;\text{if}\; i<j,\\
    &(4)\;r_n^0\circ\delta_{n+1}^0(e)=(e,1),\;e\in\Delta_n,\\
    &(5)\;r_n^n\circ\delta_{n+1}^{n+1}(e)=(e,0),\;e\in\Delta_n.
\end{align*}
   Now we define for each $n$, $h_n^i:S_n^M(X)\rightarrow S_{n+1}^M(Y)$ by 
   \begin{equation*}
       h_n^i(\alpha)=L\circ(\alpha\boxtimes \mathrm{id}_I)\circ r_n^i.
   \end{equation*}
   From the previous equalities and Lemmas~\ref{lema composicion} and \ref{lema operaciones} we obtain:
   \begin{align*}
       &(1)\;d_{n+1}^i\circ h_n^{j+1}=h_{n-1}^j\circ d_n^i,\;\text{if}\; i\leq j,\\
       &(2)\;d_{n+1}^{i+1}\circ h_n^{i+1}=d_{n+1}^{i+1}\circ h_n^i, \;\text{if}\; i<n,\\
       &(3)\;d_{n+1}^{j+1}\circ h_n^i=h_{n-1}^i\circ d_n^j,\;\text{if}\; i<j,\\
       &(4)\; d_{n+1}^0\circ h_n^0=S_n^M(S),\\
       &(5)\; d_{n+1}^{n+1}\circ h_n^n=S_n^M(R).
   \end{align*}
Let $h_n:C^M_n(X)\rightarrow C_{n+1}^M(Y)$ be the group homomorphism that extends the map from $S_n^M(X)$ to $S_{n+1}^M(Y)$ that sends $\alpha$ to $\sum_{i=0}^n(-1)^ih_n^i(\alpha)$.
   It is easy to check that 
   \begin{equation*}
       d_{n+1}\circ h_n + h_{n-1}\circ d_n=C_n^M(S)-C_n^M(R)
   \end{equation*}
   and then $h_\ast$ is a homotopy of complexes of abelian groups.
\end{proof}
\begin{definition}
    Let $X, Y$ be topological spaces and $A\subset Y$ a finite subset. We define $C_A^X:X\rightarrow Y$ as the multivalued constant map with value $A$. It is continuous \cite[Chapitre I, S. 10, n. 2 Cor. 5 du Théorème 1]{Bo}.
\end{definition}
\begin{lemma}\label{lema anulacion}
    With the notation of the previous definition, the homomorphism
    \begin{equation*}
        H_n^M(C_A^X):H_n^M(X)\rightarrow H_n^M(Y)
    \end{equation*}
    is zero for every $n>0$.
\end{lemma}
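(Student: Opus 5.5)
Factor $C_A^X$ through a point: $C_A^X=C_A^{\{\ast\}}\circ\mathrm{gr}(c)$, where $c:X\rightarrow\{\ast\}$ is the unique map and $C_A^{\{\ast\}}=\{\ast\}\times A\in M(\{\ast\},Y)$. Both sides equal $X\times A$ by the description of $S\circ\mathrm{gr}(f)$ in Example~\ref{ejemplo}. The induced chain maps compose by associativity (Lemma~\ref{lema composicion}), so $H_n^M(C_A^X)=H_n^M(C_A^{\{\ast\}})\circ H_n^M(\mathrm{gr}(c))$. It therefore suffices to show that $H_n^M(C_A^{\{\ast\}})=0$ for $n>0$.

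Rather than computing $H_n^M(\{\ast\})$, whose simplices are arbitrary multivalued maps $\Delta_n\to\{\ast\}$, I would work directly with the image. A simplex $\alpha\in S_n^M(\{\ast\})$ is a subset $\Delta_n\times\{\ast\}$, since $\tau_{|\alpha}$ is surjective. Hence $S_n^M(\{\ast\})$ has exactly one element $\alpha_n$, and $C_A^{\{\ast\}}\circ\alpha_n=\Delta_n\times A=C_A^{\Delta_n}$. So $C_\ast^M(\{\ast\})$ is $\mathbb{Z}$ in each degree, with $d_n=\sum_i(-1)^i$, which is $0$ for $n$ odd and the identity for $n\geq2$ even. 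Thus $H_n^M(\{\ast\})=0$ for all $n>0$: for odd $n$ every cycle is the boundary of $\alpha_{n+1}$, and for even $n>0$ there are no nonzero cycles. Consequently $H_n^M(\mathrm{gr}(c))$ already lands in the zero group for $n>0$, and the composite vanishes.

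The only point needing care is checking that $S_\ast^M$ is functorial with respect to composition, i.e.\ $S_\ast^M(S\circ R)=S_\ast^M(S)\circ S_\ast^M(R)$, and that it commutes with the face maps $d_n^i$. Both follow from associativity in Lemma~\ref{lema composicion}. Also one should confirm that $\{\ast\}\times A$ is a continuous multivalued map; this is the constant-map statement already cited from \cite{Bo}, with finite fibers $A$. I do not expect a serious obstacle here. The main thing to double-check is the uniqueness of the $n$-simplex of a point, which follows from the surjectivity of $\tau_{|\alpha}$.
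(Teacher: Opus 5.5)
Your proof is correct, and its core computation is the same as the paper's: a chain complex that is $\mathbb{Z}$ in every degree with $d_n=\sum_{i=0}^n(-1)^i$, i.e.\ zero for odd $n$ and the identity for even $n\geq 2$. The difference is in how the argument is organized.

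The paper stays inside $C_\ast^M(Y)$. It notes that a chain $\sum_i m_i\alpha_i$ is sent to $mC_A^{\Delta_n}$ with $m=\sum_i m_i$. It then argues by parity that the image of a cycle is either $0$ or the boundary of $mC_A^{\Delta_{n+1}}$.

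You instead factor $C_A^X=C_A^{\{\ast\}}\circ\mathrm{gr}(c)$ and prove that $H_n^M(\{\ast\})=0$ for $n>0$, using that $S_n^M(\{\ast\})$ is a singleton. The subcomplex of $C_\ast^M(Y)$ spanned by the $C_A^{\Delta_n}$, on which the paper computes, is precisely the image of your $C_\ast^M(\{\ast\})$ under $C_A^{\{\ast\}}$.

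Your route costs the easy checks of functoriality under composition and of the factorization, but it gains two things:
\begin{enumerate}
\item Your kernel/image computation treats all cases uniformly. The paper's ``so $n$ is odd'' tacitly assumes $m\neq 0$; for $m=0$ the image is trivially zero.
\item It reduces the lemma to the familiar argument that constant maps factor through a point with vanishing positive-degree homology. That vanishing for the point is a directly checkable special case of Theorem~\ref{teor principal}.
\end{enumerate}
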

\begin{proof}
    Let $\sum_im_i\alpha_i\in C_n^M(X)$ ($m_i\in \mathbb{Z}$, $\alpha_i\in S_n^M(X)$) be a cycle. Its image by $C_A^X$ in $Y$ is the cycle $\sum_i m_i C_A^{\Delta_n}=m C_A^{\Delta_n}$, where $m=\sum_im_i$. We obtain then
    \begin{equation*}
        0=d(mC_A^{\Delta_n})=\sum_{j=0}^n(-1)^jd_n^j(mC_A^{\Delta_n})=m\sum_{j=0}^n(-1)^jC_A^{\Delta_{n-1}},
    \end{equation*}
    so $n$ is odd. Thus, this cycle is a boundary:
    \begin{equation*}
    d(mC_A^{\Delta_{n+1}})=m\sum_{j=0}^{n+1}(-1)^jC_A^{\Delta_n}=mC_A^{\Delta_n}.
    \end{equation*}
\end{proof}
Finally, we obtain that all these positive homology groups vanish.
\begin{theorem}\label{teor principal}
    Let $X$ be a compact and Hausdorff topological space. Then $H_n^M(X)=0$ for all $n>0$.
\end{theorem}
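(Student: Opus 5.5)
Since the preceding lemmas give homotopy invariance (Lemma~\ref{lema principal}) and vanishing for multivalued constant maps (Lemma~\ref{lema anulacion}), I will show that $\mathrm{id}_X=\mathrm{gr}(\mathrm{id}_X)$ is $M$-homotopic to a constant map $C_A^X$ for some finite $A\subset X$. Then $\mathrm{id}=H_n^M(\mathrm{id}_X)=H_n^M(C_A^X)=0$ on $H_n^M(X)$ for $n>0$, so $H_n^M(X)=0$. Functoriality of $H^M_n$ under composition (Lemma~\ref{lema composicion}) ensures $H_n^M(\mathrm{gr}(\mathrm{id}_X))$ is the identity.

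To build the homotopy, fix any point $a\in X$ and take $A=\{a\}$. I would define $L\subset (X\times I)\times X$ by
\begin{equation*}
L=\{((x,t),x): t\in[0,1)\}\cup\{((x,t),a): t\in(0,1]\}\cup\{((x,1),x)\}\cup\{((x,0),a)\}.
\end{equation*}
More simply, $L=\Gamma_1\cup\Gamma_2$, where $\Gamma_1=\{((x,t),x): t\in[0,1]\}$ and $\Gamma_2=\{((x,t),a):t\in[0,1]\}$. This $L$ has restrictions $\mathrm{id}\cup C_{\{a\}}$ at both ends, so it does not work directly. Instead I would use the set $\Gamma_1'=\{((x,t),x): t\in[0,1/2]\}$ and $\Gamma_2'=\{((x,t),a): t\in[1/2,1]\}$, with $L=\Gamma_1'\cup\Gamma_2'$. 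Both pieces are closed: $\Gamma_1'$ is the preimage of the diagonal of $X$ under $(x,t,y)\mapsto(x,y)$ intersected with $t\le1/2$, and the diagonal is closed because $X$ is Hausdorff. Since $X\times I\times X$ is compact (this is where compactness enters), $L$ is compact. Hence $\tau_{|L}$ is a continuous map from a compact space to the Hausdorff space $X\times I$, so it is closed with compact fibres, i.e.\ proper. Its fibres have at most two points and it is surjective, so $L\in M(X\times I,X)$. The restriction to $t=0$ is $\mathrm{gr}(\mathrm{id}_X)$ and the restriction to $t=1$ is $C_{\{a\}}^X$. Here Example~\ref{ejemplo} identifies $L_{|X\times\{t\}}$ with the slice $\{(x,y):((x,t),y)\in L\}$.

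The main obstacle is exactly this properness check, and it shows why the hypotheses are needed. Hausdorffness gives closedness of the graph pieces and of the target $X\times I$, and compactness makes closed subsets compact, so closedness of the projection follows for free. The gluing at $t=1/2$, where the fibre is $\{x,a\}$, is harmless because finite fibres are allowed. Beyond this, I only need to check carefully that the identifications $L_{|X\times\{0\}}=\mathrm{gr}(\mathrm{id}_X)$ and $L_{|X\times\{1\}}=C^X_{\{a\}}$ hold on the nose as subsets, using Example~\ref{ejemplo}. Then Lemma~\ref{lema principal} and Lemma~\ref{lema anulacion} finish the proof.
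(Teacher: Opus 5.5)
Your proposal is correct and follows the paper's proof. The paper takes $L$ to be the closure in $X\times I\times X$ of the graph of the map equal to $x$ for $t<1/2$ and to $x_0$ for $t\geq 1/2$, which (as its subsequent remark notes) is exactly your $\Gamma_1'\cup\Gamma_2'$; it checks properness the same way (a compact $L$ mapping to the Hausdorff space $X\times I$) and then concludes with Lemmas~\ref{lema principal} and~\ref{lema anulacion}.
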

\begin{proof}
    Let $x_0\in X$ and $C=C_{\{x_0\}}^X:X\rightarrow X$ the constant map of value $x_0$. By Lemmas~\ref{lema principal} and \ref{lema anulacion}, it suffices to build an $M$-homotopy between $C$ and $\mathrm{id}_X$. Let us consider the map $L':X\times I\rightarrow X$ defined by:
        \[ L'(x,t) = \begin{cases} 
          x & t< 1/2 \\
          x_0 & t\geq 1/2 
       \end{cases}
    \]
Let $L\in M(X\times I, X)$ be the closure of $\mathrm{gr}(L')$ in $X\times I\times X$. The map $\tau_{|L}:L\rightarrow X\times I$ is proper, since $L$ is compact and $X\times I$ is Hausdorff \cite[Chapitre I, S. 10, n.2, Cor. 2 du Théorème 1]{Bo} and then $L\in M(X\times I, X)$ ($\tau_{|L}$ is also surjective and the fibers are finite). Moreover, 
\begin{equation*}
    L_{|X\times\{0\}}=\mathrm{id}_X,\; \text{and}\; L_{|X\times\{1\}}=C.
\end{equation*}
\end{proof}
\begin{remark}
    The proof of Theorem~\ref{teor principal} shows a method to build $M$-homotopies between two continuous multivalued maps. However, in the particular case of this proof, the $M$-homotopy $L$ is easy to describe directly:
    \begin{equation*}
        L=\mathrm{gr}(L')\cup \{(x,1/2,x)|x\in X\}.
    \end{equation*}
\end{remark}
\section{Multivalued homotopy groups}\label{sec grupos}
In this section, we only note that we can also define homotopy groups with multivalued maps but, as expected, they vanish for compact Hausdorff spaces. First, we notice that $S_\ast^M(X)$ is a simplicial set (the faces have already been defined in Definition~\ref{def caras y homo} and the degeneracy maps are defined similarly). Indeed, it is a Kan complex \cite[1.6]{Ma}. The proof is the usual one \cite[1.5]{Ma} once we consider the following lemma.
\begin{lemma}\label{lema kan}
    Let $X$ and $Y$ be topological spaces and $\{X_i\}_{i=1}^n$ a cover of closed spaces $X_i\subset X$. For each $i\in\{1,\ldots,n\}$, let $T_i\in M(X_i,Y)$ be such that $T_{i|X_i\cap X_j}=T_{j|X_i\cap X_j}$ for all $i,j$. Then, there exists a unique $T\in M(X,Y)$ such that $T_{|X_i}=T_i$ for all $i$.
\end{lemma}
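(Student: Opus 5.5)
The natural candidate is the union of the graphs: set $T=\bigcup_{i=1}^n T_i\subset X\times Y$, regarding each $T_i\subset X_i\times Y\subset X\times Y$. Here $T_{|X_i}$ means the composition of $T$ with the inclusion $X_i\hookrightarrow X$. By Example~\ref{ejemplo}, this composition is just $T\cap (X_i\times Y)=\tau_{|T}^{-1}(X_i)$. So the statement reduces to a set-theoretic identification together with a properness check.

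\emph{Uniqueness.} If $T'\in M(X,Y)$ satisfies $T'_{|X_i}=T_i$ for all $i$, then $T'\cap(X_i\times Y)=T_i$. Since the $X_i$ cover $X$, we get $T'=\bigcup_i T'\cap(X_i\times Y)=\bigcup_i T_i=T$.

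\emph{Restrictions of $T$.} We check that $T\cap (X_i\times Y)=T_i$. Take $(x,y)\in T_j$ with $x\in X_i$. Then $(x,y)$ lies in $T_j$ restricted to $X_i\cap X_j$, which is $T_{j|X_i\cap X_j}$. By hypothesis this equals $T_{i|X_i\cap X_j}\subset T_i$. The reverse inclusion is obvious.

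\emph{$T$ is a continuous multivalued map.}
\begin{itemize}
\item Surjectivity of $\tau_{|T}$ follows since each $\tau_{|T_i}$ is surjective onto $X_i$ and the $X_i$ cover $X$.
\item Finite fibres: the fibre over $x$ is the union of at most $n$ finite fibres of the maps $\tau_{|T_i}$.
\item Properness is the main point. Using the characterization recalled after Definition (proper $\iff$ closed with compact fibres), it suffices to show $\tau_{|T}$ is closed.
\end{itemize}

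For closedness, let $F\subset T$ be closed in $T$. Then $F\cap T_i$ is closed in $T_i$, and $T_i=\tau_{|T}^{-1}(X_i)$ is closed in $T$ because $X_i$ is closed. Since $\tau_{|T_i}:T_i\to X_i$ is closed, $\tau(F\cap T_i)$ is closed in $X_i$, hence closed in $X$ because $X_i$ is closed in $X$. Therefore $\tau(F)=\bigcup_{i=1}^n\tau(F\cap T_i)$ is a finite union of closed sets, so it is closed.

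The finiteness of the cover and the closedness of the $X_i$ are exactly what make this step work.

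The only delicate point is making sure that "restriction" in the paper's sense, namely composition with the graph of the inclusion, really coincides with $\tau_{|T}^{-1}(X_i)$, and that continuity of $T_i$ is meant with $X_i$ carrying the subspace topology. Both follow directly from Example~\ref{ejemplo}.
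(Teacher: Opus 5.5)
Your proof is correct and takes essentially the same route as the paper: set $T=\bigcup_i T_i$, and deduce closedness of the projection from the finite closed cover, since each piece maps to a closed subset of $X$ because $T_i\to X_i$ is closed and $X_i$ is closed in $X$. The paper checks universal closedness directly, by pulling back along an arbitrary $Z\to X$ and using that $T_i\to X_i\hookrightarrow X$ is proper; you instead invoke the equivalence ``proper $\iff$ closed with (quasi-)compact fibres'', so you only need plain closedness, and you also spell out uniqueness and the identity $T\cap(X_i\times Y)=T_i$, which the paper leaves implicit.
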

\begin{proof}
We start by defining $T=\cup_{i=1}^nT_i\subset\cup_{i=1}^n(X_i\times Y)=X\times Y$. In order to check that $\tau_{|T}:T\rightarrow X$ is proper, for each continuous map $Z\rightarrow X$ we have a commutative diagram
\[\begin{tikzcd}
	{T_i\times_X Z} & {T\times_X Z} & Z \\
	{T_i} & T & X.
	\arrow[hook, from=1-1, to=1-2]
	\arrow[from=1-1, to=2-1]
	\arrow[from=1-2, to=1-3]
	\arrow[from=1-2, to=2-2]
	\arrow[from=1-3, to=2-3]
	\arrow[hook, from=2-1, to=2-2]
	\arrow[from=2-2, to=2-3]
\end{tikzcd}\]
Sine $T_i\rightarrow X_i$ and $X_i\hookrightarrow X$ are proper (the second one because it is the inclusion of a closed set), we have that $T_i\rightarrow X$ is proper and hence the composition of the upper line in the diagram is a closed map. Since $\{T_i\times_XZ\}_{i}$ is a finite cover of $T\times_X Z$, the map $T\times_X Z\rightarrow Z$ is closed.
\end{proof}
\begin{definition}
We consider now a $0$-simplex in $S_\ast^M(X)$. This is an element of $S_0^M(X)$ that we can identify with a finite subset $A\subset X$. We define the \textit{multivalued homotopy groups} of $X$ as $\pi_n^M(X,A)=\pi_n(S^M_\ast(X),A)$ for each $n>0$ \cite[3.6]{Ma}.
\end{definition}
As in the usual case, if $X$ is path-connected and $A,B\subset X$ are finite subsets of $X$,
\begin{equation*}
    \pi_n^M(X,A)\simeq\pi_n^M(X,B).
\end{equation*}
This can be deduced from \cite[Prop. I.8.1]{GJ} using the following lemma.
\begin{lemma}
    Let $A,B$ be finite subsets of a path-connected space $X$. Then, there is a multivalued path joining them, that is, there exists $T\in M(I,X)=S_1^M(X)$ such that its faces are $A$ and $B$.
\end{lemma}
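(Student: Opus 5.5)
We need $T\in M(I,X)$ with $T_{|\{0\}}=A$ and $T_{|\{1\}}=B$; here the $0$-th face of a $1$-simplex is its restriction to one endpoint and the other face is the restriction to the other endpoint, and the orientation convention does not matter. The plan is to take a finite union of graphs of ordinary paths. Write $A=\{a_1,\ldots,a_p\}$ and $B=\{b_1,\ldots,b_q\}$, both nonempty. Finite subsets are nonempty because $\tau_{|T}$ is surjective, so a $0$-simplex has at least one point. Choose a surjection between index sets: set $N=\max(p,q)$ and pick maps $\phi:\{1,\ldots,N\}\to\{1,\ldots,p\}$ and $\psi:\{1,\ldots,N\}\to\{1,\ldots,q\}$, both surjective. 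For each $k$, path-connectedness of $X$ gives a continuous path $\gamma_k:I\to X$ with $\gamma_k(0)=a_{\phi(k)}$ and $\gamma_k(1)=b_{\psi(k)}$.

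Define
\begin{equation*}
T=\bigcup_{k=1}^N \mathrm{gr}(\gamma_k)\subset I\times X .
\end{equation*}
To see that $T\in M(I,X)$, we check three things.

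\emph{Surjectivity.} Each graph already projects onto $I$.

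\emph{Finite fibers.} The fiber over $t$ is $\{\gamma_k(t)\}_k$, which has at most $N$ points.

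\emph{Properness.} The map $\tau_{|\mathrm{gr}(\gamma_k)}$ is a homeomorphism onto $I$, hence proper. The restriction of $\tau_{|T}$ to each graph is therefore closed. Each $\mathrm{gr}(\gamma_k)$ is a subset of $T$, and $T$ is a finite union of these pieces. So for a closed set $F\subset T$ we have $\tau(F)=\bigcup_k\tau(F\cap\mathrm{gr}(\gamma_k))$, a finite union of closed sets, hence closed. For universal closedness, use the same argument after base change: $\{\mathrm{gr}(\gamma_k)\times_I Z\}_k$ finitely covers $T\times_I Z$, and each piece maps properly to $Z$. This is exactly the argument in the proof of Lemma~\ref{lema kan}. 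Alternatively, appeal to the criterion that $\tau_{|T}$ is closed with compact (finite) fibers.

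One point needs care here. The covering pieces in the argument of Lemma~\ref{lema kan} were closed subsets there, but here they need not be. If $X$ is not Hausdorff, $\mathrm{gr}(\gamma_k)$ may fail to be closed in $I\times X$. The argument only needs images of closed subsets of the whole space, intersected with the pieces. The key observation is that $F\cap\mathrm{gr}(\gamma_k)$ is closed in $\mathrm{gr}(\gamma_k)$ for the subspace topology, and the same holds after base change. So properness of each piece suffices, and I expect this step to go through without a Hausdorff assumption. This is the only step needing attention.

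\emph{Faces.} By Example~\ref{ejemplo}, the face $T_{|\{0\}}$ is $\{x\mid (0,x)\in T\}=\{\gamma_k(0)\}_k=\{a_{\phi(k)}\}_k$. This equals $A$ by surjectivity of $\phi$. Likewise $T_{|\{1\}}=B$ by surjectivity of $\psi$. This completes the construction.
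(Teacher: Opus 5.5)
Your proof is correct and follows the paper's approach: $T$ is a finite union of graphs of ordinary paths, and properness of $\tau_{|T}$ is checked as in the proof of Lemma~\ref{lema kan}. Your remark that the pieces need not be closed is right and harmless, since the argument only uses that $F\cap\mathrm{gr}(\gamma_k)$ is closed in the subspace topology. The only difference is cosmetic: the paper takes one path $f_{ab}$ for every pair $(a,b)\in A\times B$, while you use $\max(p,q)$ paths indexed through surjections $\phi,\psi$, which gives the same faces with fewer branches.
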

\begin{proof}
    For each $a\in A$ and $b\in B$, we take $f_{ab}:I\rightarrow X$ a map such that $f_{ab}(0)=a$ and $f_{ab}(1)=b$. Now we define
    \begin{equation*}
        T=\cup_{a\in A,\,b\in B}\mathrm{gr}(f_{ab})\subset I\times X.
    \end{equation*}
    It is easy to check that, with this definition, $T\in M(I,X)$ (the proof is similar to the one of Lemma~\ref{lema kan}).
\end{proof}
Again, if we write the proof of Theorem~\ref{teor principal} in terms of  homotopy of simplicial sets (the proof of Lemma~\ref{lema principal} shows that an $M$-homotopy induces a homotopy of maps of simplicial sets), we obtain $\pi_n^M(X,A)=0$ for each $n>0$ if $X$ is compact and Hausdorff.
\section{A final remark in fixed-point theory}
Even if multivalued homology does not allow to detect fixed points with a kind of Lefschetz theorem, we can adapt the proof of \cite[Theorem 4]{St1} to detect fixed sets.
\begin{definition}
    Let $T:X\rightarrow X$ be a continuous multivalued map. We say that a subset $A\subset X$ is a \textit{fixed subset} if $A=T(A)$ ($T(A)$ is the projection on the second factor $X$ of $T\cap(A\times X)$).
\end{definition}
We start with an analogue to \cite[Lemma 4]{St1}:
\begin{lemma}\label{lema analogo 4}
    Let $T:X\rightarrow Y$ be a continuous multivalued map, with $X$ a compact space and $Y$ a compact and Hausdorff space. Then, $T(A)$ is closed for every closed subspace $A\subset X$.
\end{lemma}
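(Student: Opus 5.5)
The plan is to express $T(A)$ as the image of a compact set under a continuous map into a Hausdorff space. Write $\pi_2: X\times Y\rightarrow Y$ for the projection onto the second factor, so that by definition
\begin{equation*}
T(A)=\pi_2\bigl(T\cap (A\times Y)\bigr)=\pi_2\bigl(\tau_{|T}^{-1}(A)\bigr).
\end{equation*}
Since $Y$ is Hausdorff, every compact subset of $Y$ is closed. It therefore suffices to show that $\tau_{|T}^{-1}(A)$ is compact, because its image under the continuous map $\pi_2$ is then compact.

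First step: $A$ is closed in the compact space $X$, so $A$ is compact. Second step: $\tau_{|T}:T\rightarrow X$ is proper by the definition of continuous multivalued map, and the preimage of a compact set under a proper map is compact. This is the standard property of proper maps in \cite[Chapitre I, S. 10, n. 2, Prop. 6]{Bo}, consistent with the characterization recalled after the remark on properness.

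If one prefers to avoid that citation, there is a direct route. Since $A$ is closed, $\tau_{|T}^{-1}(A)$ is closed in $T$, so it suffices to show that $T$ itself is compact. By Bourbaki's characterization, a space $T$ is compact iff $T\rightarrow \{\ast\}$ is proper. The map $X\rightarrow\{\ast\}$ is proper because $X$ is compact. Hence the composition $T\rightarrow X\rightarrow \{\ast\}$ is proper by \cite[Chapitre I, S. 10, n. 1, Prop. 5]{Bo}, and so $T$ is compact. A closed subset of a compact space is compact, which gives compactness of $\tau_{|T}^{-1}(A)$.

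Final step: $\pi_2$ restricted to $\tau_{|T}^{-1}(A)$ is continuous, so $T(A)$ is compact, and therefore closed in the Hausdorff space $Y$.

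The only point needing care is the compactness of $T$, which rests on properness of $\tau_{|T}$ and not on any closedness of $T$ in $X\times Y$. Note that no Hausdorff hypothesis on $X$ is needed; I would double-check that the Bourbaki results used (composition of proper maps, and compact $\Leftrightarrow$ proper to a point) hold without separation axioms, which they do in Bourbaki's framework since there "compact" for proper maps means quasi-compact.
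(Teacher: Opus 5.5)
Your proof is correct and follows essentially the same route as the paper. The paper likewise uses properness of $\tau_{|T}$ and compactness of $X$ to get $T$ compact, then notes that $T\cap(A\times Y)$ is compact and that its projection to the Hausdorff space $Y$ is compact, hence closed. Your primary variant, taking the preimage of the compact set $A$ directly, is only a cosmetic difference.
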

\begin{proof}
    Since $\tau_{|T}:T\rightarrow X$ is proper and $X$ is compact, then $T=(\tau_{|T})^{-1}(X)$ is compact \cite[Chapitre I, S. 10, n. 3, Prop. 6]{Bo}. Now, if $A\subset X$ is a closed subspace, it is compact and hence $T\cap(A\times Y)$ is also compact. Then, the projection of this term on $Y$ under the second projection is also compact and so it must be closed since $Y$ is Hausdorff.
\end{proof}
We also have an analogue to \cite[Lemma 3]{St1}:
\begin{lemma}\label{lema analogo 3}
    Let $T:X\rightarrow Y$ be a continuous multivalued map with $X$ a compact Hausdorff space and $Y$ a Hausdorff space. Let $\{x_n\}_{n\in\mathbb{N}^+}$ be a sequence in $X$ that converges to a point $x_0$ and suppose that there is a sequence $y_n\in T(x_n)$ that converges to a point $y_0\in Y$. Then $y_0\in T(x_0)$.
\end{lemma}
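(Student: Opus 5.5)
The natural route is to show that $T$ is a closed subset of $X\times Y$. Since $(x_n,y_n)\in T$ and $(x_n,y_n)\to(x_0,y_0)$ in $X\times Y$, closedness of $T$ gives $(x_0,y_0)\in T$, which is exactly $y_0\in T(x_0)$. So the whole statement reduces to proving that the graph of a continuous multivalued map is closed under the given hypotheses.

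To prove that $T$ is closed, I would use properness of $\tau_{|T}$ directly. Take a point $(x,y)\notin T$.

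First, the fiber $\tau_{|T}^{-1}(x)=\{x\}\times T(x)$ is finite, and $y\notin T(x)$. Since $Y$ is Hausdorff, I choose an open $V\ni y$ and an open $W\supset T(x)$ with $V\cap W=\emptyset$; this is possible because $T(x)$ is finite.

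Next, the set $F=T\setminus (X\times W)$ is closed in $T$. Because $\tau_{|T}$ is a closed map, $\tau_{|T}(F)$ is closed in $X$, and it does not contain $x$, since every point of $T$ over $x$ lies in $X\times W$. Let $U=X\setminus\tau_{|T}(F)$, an open neighbourhood of $x$.

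Finally, I check that $(U\times V)\cap T=\emptyset$. If $(x',y')\in T$ with $x'\in U$, then $(x',y')\notin F$, so $y'\in W$ and hence $y'\notin V$. Thus $U\times V$ is a neighbourhood of $(x,y)$ missing $T$, and $T$ is closed.

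This argument only uses that $\tau_{|T}$ is closed with finite fibers and that $Y$ is Hausdorff; compactness and Hausdorffness of $X$ are not needed. The only delicate step is the tube-type argument above, which replaces the usual compactness of $Y$ with closedness of $\tau_{|T}$. Alternatively, one could invoke \cite[Chapitre I, S. 10]{Bo}: the graph is closed because $T\times_X Y\to Y$ is closed. The direct argument seems cleaner. Sequences suffice for the final step because closed sets contain the limits of their convergent sequences.
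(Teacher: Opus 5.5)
Your proof is correct. Like the paper, you reduce the statement to showing that $T$ is closed in $X\times Y$ and then pass to the limit of $(x_n,y_n)$. Where you differ is in how closedness is obtained.

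The paper does it in one line. Since $\tau_{|T}$ is proper and $X$ is compact, $T=(\tau_{|T})^{-1}(X)$ is compact, as in the proof of Lemma~\ref{lema analogo 4}. A compact subset of the Hausdorff space $X\times Y$ is closed. This is where the compactness and the Hausdorffness of $X$ are used.

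Your tube-type argument instead uses only three things: $\tau_{|T}$ is closed, its fibres are finite, and $Y$ is Hausdorff. So it proves a stronger fact: the graph of any continuous multivalued map into a Hausdorff space is closed, with no hypotheses on $X$ at all. It would also work verbatim with compact rather than finite fibres. The paper's route is shorter given the preceding lemma. Yours is self-contained and shows that the hypotheses on $X$ in this lemma are superfluous.

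One small point concerns your Bourbaki alternative. As written it is garbled, since there is no map $Y\to X$ along which to form $T\times_X Y$. The correct version runs as follows. Because $\tau_{|T}$ is proper, the map $\tau_{|T}\times\mathrm{id}_Y\colon T\times Y\to X\times Y$ is closed. It sends the graph of the second projection $T\to Y$, which is closed in $T\times Y$ because $Y$ is Hausdorff, onto $T$. Hence $T$ is closed.
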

\begin{proof}
    Since $T$ is closed in $X\times Y$ (it is a compact space of a Hausdorff space) and $\{(x_n,y_n)\}_{n\in\mathbb{N}^+}\subset T$, we conclude that $\mathrm{lim}(x_n,y_n)=(x_0,y_0)\in T$.
\end{proof}
Now the proof of the next theorem is analogous to the one in \cite[Theorem 4]{St1}.
\begin{theorem}
    Every compact Hausdorff space has the fixed set property for a continuous multivalued map, \textit{i.e.}, there is always a fixed subset.
\end{theorem}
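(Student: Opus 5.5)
The plan is a Zorn's lemma argument in the spirit of \cite[Theorem 4]{St1}, where Lemma~\ref{lema analogo 4} supplies the topological input. Let $X$ be compact Hausdorff and $T\in M(X,X)$. Consider the family
\begin{equation*}
\mathcal{F}=\{A\subset X \mid A \text{ nonempty, closed, and } T(A)\subset A\},
\end{equation*}
partially ordered by inclusion. I will look for a minimal element of $\mathcal{F}$ and then show that minimality forces $T(A)=A$.

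First, $\mathcal{F}$ is nonempty, since $X\in\mathcal{F}$ (assuming $X\neq\emptyset$; the empty space is trivial). Second, every chain $\{A_i\}_{i\in J}$ in $\mathcal{F}$ has a lower bound, namely $B=\bigcap_i A_i$. This set is closed. It is nonempty because the $A_i$ are nonempty closed subsets of the compact space $X$ forming a chain, hence have the finite intersection property. It satisfies $T(B)\subset B$ because $B\subset A_i$ gives $T(B)\subset T(A_i)\subset A_i$ for every $i$, by the evident monotonicity of $A\mapsto T(A)$. So Zorn's lemma, applied to the reversed order, yields a minimal element $A\in\mathcal{F}$.

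Next set $A'=T(A)\subset A$ and check that $A'\in\mathcal{F}$:
\begin{itemize}
\item $A'$ is nonempty because $\tau_{|T}$ is surjective, so every point of $A$ has an image.
\item $A'$ is closed by Lemma~\ref{lema analogo 4}, applied with source and target the compact Hausdorff space $X$, since $A$ is closed.
\item $T(A')=T(T(A))\subset T(A)=A'$ by monotonicity, using $T(A)\subset A$.
\end{itemize}
Since $A'\subset A$, minimality gives $A'=A$, that is, $T(A)=A$, so $A$ is a fixed subset. Note that $A$ is moreover nonempty and closed.

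The only step needing real care is closedness of $T(A)$, which is exactly where compactness and the Hausdorff property enter. This is handled entirely by Lemma~\ref{lema analogo 4}, so I expect no genuine obstacle. Lemma~\ref{lema analogo 3} is not needed in this argument, because we avoid sequences, which would not suffice in general compact spaces anyway.
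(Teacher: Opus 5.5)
Your proof is correct, but it takes a different route from the one the paper points to.

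The paper writes out no proof. It only says the argument is analogous to \cite[Theorem 4]{St1}, and the two lemmas it prepares suggest the intended iteration argument:
\begin{itemize}
\item The sets $X\supset T(X)\supset T^2(X)\supset\cdots$ are nonempty and closed by Lemma~\ref{lema analogo 4}.
\item So $A=\bigcap_n T^n(X)$ is nonempty and closed, and $T(A)\subset A$ by monotonicity.
\item For the reverse inclusion, take $y\in A$ and pick $x_n\in T^n(X)$ with $y\in T(x_n)$. Lemma~\ref{lema analogo 3}, applied at a limit point $x_0\in A$, gives $y\in T(x_0)$.
\end{itemize}

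That route avoids Zorn's lemma. It also yields a canonical fixed set, in fact the largest one, since any $B$ with $T(B)=B$ satisfies $B=T^n(B)\subset T^n(X)$ for all $n$. Its delicate point is the limiting step. Phrased with sequences, as Lemma~\ref{lema analogo 3} is, it needs a convergent subsequence, which a general compact Hausdorff space (e.g.\ $\beta\mathbb{N}$) need not provide. So that step must be redone with cluster points, or by intersecting the decreasing nonempty closed sets $\{x\mid y\in T(x)\}\cap T^n(X)$.

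Your minimality argument never needs that reverse inclusion. Its only inputs are Lemma~\ref{lema analogo 4} and surjectivity of $\tau_{|T}$, at the price of Zorn's lemma. What you obtain is a minimal closed invariant set rather than the maximal fixed one.

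You are also right to insist on a nonempty fixed subset: since $T(\emptyset)=\emptyset$, the statement read literally is trivial.
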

	
\end{document}